\newcommand{\vect}[1]{\left(\begin{array}{r} #1 \end{array}\right)}
\newtheorem{Theorem}{Theorem}[section]
\newtheorem{Lemma}[Theorem]{Lemma}
\newtheorem{Proposition}[Theorem]{Proposition}
\theoremstyle{definition} % No slanting in statements
\newtheorem{Example}[Theorem]{Example}
\newtheorem{Definition}[Theorem]{Definition}
\newcommand  {\NN}{\mathbb{N}}
\newcommand  {\QQ}{\mathbb{Q}}
\newcommand  {\RR}{\mathbb{R}}
\newcommand  {\ZZ}{\mathbb{Z}}
\newcommand{\cL}{\mathcal{L}}
\newcommand{\cM}{\mathcal{M}}
\newcommand  {\<}{\langle}
\renewcommand{\>}{\rangle}
\newcommand  {\GL}  {\operatorname{GL}}
\newcommand  {\Gr}  {\operatorname{Gr}}
\newcommand  {\Ker} {\operatorname{Ker}}
\newcommand  {\IP}  {\operatorname{IP}}
\renewcommand{\int} {\operatorname{int}}
\newcommand  {\Grp} {\operatorname{Grp}}
\newcommand  {\Hilb}{\mathcal{H}}
\title{ 
Maximal lattice free bodies, test sets and the Frobenius problem 
} 
\author{
Anders Jensen, Niels Lauritzen and Bjarke Roune
}
\begin{document}

\maketitle

\begin{abstract}

  Maximal lattice free bodies are maximal polytopes without interior
  integral points. Scarf initiated the study of maximal lattice free
  bodies relative to the facet normals in a fixed matrix. In this
  paper we give an efficient algorithm for computing the maximal
  lattice free bodies of an integral $(d+1)\times d$
  matrix $A$. An important ingredient is a test set for a certain
  integer program associated with $A$. This test set may be computed
  using algebraic methods.

  As an application we generalize the Scarf-Shallcross algorithm for
  the three-dimensional Frobenius problem to arbitrary dimension.  In
  this context our method is inspired by the novel algorithm by
  Einstein, Lichtblau, Strzebonski and Wagon and the Gr\"obner basis
  approach by Roune.
\end{abstract}

%\tableofcontents

\section{Introduction}\label{intro}

We will introduce this paper by relating it to the Frobenius problem.
Let $\NN$ denote the natural numbers and $\ZZ$ the integers.  For
$a_1, \dots, a_n\in \NN$ with $\ZZ a_1+\cdots+\ZZ a_n=\ZZ$, the
complement of the semigroup $\NN a_1 + \cdots + \NN a_n$ in $\NN$ is
finite. Its maximum is denoted $g(a_1, \dots, a_n)$ and called the
\emph{Frobenius number} of $a_1, \dots, a_n$.  Consider as an example,
the semigroup $S$ generated by $6, 10$ and $15$ in $\NN$. The
complement of $S$ is
$$
\{1, 2, 3, 4, 5, 7, 8, 9, 11, 13, 14,  17, 19, 23, 27, 29\}.
$$ and $g(6, 10, 15) = 29$. Computing the Frobenius number is in
general an NP-hard problem, which is not known to be in NP (see
\cite[Appendix A]{Ramirez2005}). This phenomenon is perhaps related to
the naive method of computing the Frobenius number by finding the
first consecutive sequence of $a_1$ natural numbers in the semigroup.
The search for a consecutive sequence can be simplified through the
following classical result due to Brauer and Shockley
\cite{BrauerShockley1962}: let $r_f$ be the smallest natural number of
the form
$$
x_2 a_2 + \cdots + x_n a_n
$$
congruent to a given integer $f$ modulo
$a_1$, where $x_2,\dots, x_n\in \NN$. 
Then
\begin{equation}
g(a_1,\dots, a_n) = -a_1 + \max\{r_f \mid 0 < f < a_1\}\label{eq:BS}
\end{equation}
The reader may find it useful to deduce the classical result
$$
g(a_1, a_2) = a_1 a_2 - a_1 - a_2
$$ due to Sylvester (1884) as a special case.  

In the language of optimization, \eqref{eq:BS} amounts to solving
$a_1-1$ group programs\footnote{The notion of group programs and
(graph) algorithms for solving them go back to Gomory
\cite{Gomory1965}.}.  These group programs may be interpreted as
finding shortest paths from $0$ in a graph with vertices $\ZZ/a_1 \ZZ$
and edges suitably weighted by $a_2,\dots, a_n$. This is basically the
graph algorithm in \cite{Nijenhuis1979} for computing the Frobenius
number (see also \cite{Beihoffer2005}).  The graph algorithms have the
obvious draw-back that they are exponential in the input size in fixed
dimension.

Surprisingly there are
polynomial algorithms in fixed dimension related to the theory of
integral points in convex polytopes. The first such algorithm was
found by Kannan \cite{Kannan1992}. Later a remarkable algorithm
related to rational generating functions was discovered by Barvinok
and Woods \cite{BarvinokWoods2003}. Both of these algorithms involve
deep mathematical insights, but use quite time consuming
operations of polynomial complexity in fixed dimension.

In this paper we present an
algorithm for enumerating maximal lattice free bodies with facet
normals in the rows of a fixed integral matrix. 
A \emph{maximal lattice free body} is a polytope maximal with respect
to having no interior integral points.  
As an application we
generalize the Scarf-Shallcross algorithm \cite{ScarfShallcross1993}
for computing the Frobenius number for three natural numbers.

Let us briefly recall the
beautiful relation between maximal lattice free bodies and the
Frobenius number: given the coprime natural numbers $a=(a_1,\dots,
a_n)$, Scarf and Shallcross introduced the
polytopes
$$
K_b = \{x\in \RR^{n-1} \mid A x \leq b\}
$$ for $b\in \ZZ^n$, where $A$ is an $n\times (n-1)$ matrix with
columns forming a basis of the lattice $\{v\in \ZZ^n \mid a v =
0\}$. The Frobenius number is then given by
\begin{align*}
  g(a_1, \dots, a_n) = &\max\{a  b \mid b\in \ZZ^n, K_b 
\hbox{ maximal lattice free body}\}\\
  &- \sum a_i.
\end{align*}
For $3\times 2$ matrices Scarf and Shallcross gave a very efficient
algorithm for computing the maximal lattice free bodies up to integral
translation and thereby the Frobenius number for $n=3$.

For integral $n\times (n-1)$ matrices our algorithm
generalizes the Scarf-Shallcross algorithm.  In the context of the
Frobenius problem, an algebraic version of our algorithm has led to
record breaking computations \cite{Roune2007}. It is possible to
extend our algorithm to compute the maximal lattice free bodies for
arbitrary integral matrices by working with ideal points on facets as
in \cite{BaranyScarfShallcross1998}. We only treat the simplicial case
in this paper.

This work was originally prompted by an attempt to understand the
novel algorithm of Einstein, Lichtblau, Strzebonski and Wagon
\cite{EinsteinLichtblauStrzebonskiWagon2006} through the geometric
language in \cite{ScarfShallcross1993}. We thank Daniel Lichtblau and
Stan Wagon for several discussions related to
\cite{EinsteinLichtblauStrzebonskiWagon2006}.

We also thank Herb Scarf, Bernd Sturmfels and Rekha Thomas for
enlightening discussions on neighbors and maximal lattice free bodies.
This work was carried out at Institute for Mathematics and Its
Applications (IMA), University of Minnesota. We thank IMA for
providing optimal research conditions during the thematic year on
``Applications of Algebraic Geometry'', 2006/7.
\
\section{Preliminaries}

We begin by recalling a few concepts and a bit of notation from
polyhedral geometry.
First we define the $m$-dimensional vector
$\max(b_1,\dots, b_r)$ as the coordinate-wise maximum of the vectors
$b_1,\dots,b_r\in \RR^m$.
For example
$$
\max
\left(
\vect{-1 \\ 4 \\ 7},
\vect{ 0 \\ 1 \\ 8},
\vect{-5 \\-3 \\ 6}
\right)=
\vect{ 0 \\ 4 \\ 8}.
$$

\subsection{Polyhedra}

A \emph{polyhedron} $P$ in $\RR^n$ is the set of solutions to a finite
number of linear inequalities in $n$ variables: $a_i^t x\leq b_i$ for
$a_i\in \RR^n$ and $b_i\in \RR$, where $i = 1, \dots, m$.  We will use
the notation $P_A(b):= P =\{x\in \RR^n\mid A x \leq b\}$ collecting
the normal vectors $a_1^t, \dots, a_m^t$ in the rows of an $m\times n$
matrix $A$ and the bounds $b_i$ in a vector $b\in \RR^m$. Notice that
some of the inequalities may be redundant in defining $P_A(b)$. If
$a_i^t x \leq b_i$ is redundant we call it an \emph{inactive facet}.

A polyhedron of the form $P_A(0)$ is
called a \emph{polyhedral cone}. A polyhedral cone
is called \emph{pointed} if it does not contain a line i.e.~if
$\Ker(A)=0$. A polyhedron, $P_A(b)$, is called \emph{rational} if the
entries of $A$ and $b$ are rational numbers.

Suppose $A$ is fixed. Then the smallest polyhedron containing
$v_1, \dots, v_r\in \RR^n$ is given by
$$
\<v_1, \dots, v_r\>_A := P_A(\max(A v_1, \dots, A v_r)).
$$ 

\section{Integer programs and test sets}

Consider a cost vector $c\in \RR^n$. We let $\IP_{A,c}(b)$ denote the
integer linear program
$$
\min\{ c^t x \mid x\in P_A(b)\cap \ZZ^n\} = \min\{c^t x\mid A x \leq b,
x\in \ZZ^n\},
$$
where $b\in \RR^m$ and $A\in\RR^{m\times n}$.

Informally a test set for an integer program is a finite set of
integral (test) vectors, such that a feasible solution is optimal if
it cannot be improved by moving in the direction of any of the test
vectors. Here is the precise definition.

\begin{Definition}
A \emph{test set} for the family $\IP_{A, c}=\{\IP_{A,c}(b) \mid b\in
\RR^m\}$ of integer programs is a \emph{finite} set $T\subset \ZZ^n$ such that 
\begin{enumerate}[(a)]
\item
$c^t  v < 0$ for every $v\in T$.
\item
$v_0\in P_A(b)\cap \ZZ^n$ is optimal if and only if
$v_0 + v\not\in P_A(b)$ for every $v\in T$.
\end{enumerate}
\end{Definition}
Test sets always exist when $A$ is an integral (or rational) matrix.
This follows for example from the following result due to Cook,
Gerards, Schrijver and Tardos.

\begin{Theorem}[{\cite[\S 17.3]{Schrijver1986}}]\label{Theorem:Schrijver}
Let $A$ be an integral $m\times n$ matrix, with all
subdeterminants at most $\Delta$ in absolute value, let $b$ be a
column $m$-vector and $c$ a row $n$-vector. Let $z$ be a feasible, but
not optimal, solution of $\max\{c x\mid A x \leq b; \hbox{$x$
  {\rm integral}}\}$. Then there exists a feasible solution $z'$ such that
$c z' > c z$ and $\parallel z - z' \parallel_\infty < n \Delta$.
\end{Theorem}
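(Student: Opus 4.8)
The plan is to deduce the statement from the following symmetric claim, proved by induction on $\|z^{*}-z\|_{1}$: if $z$ is feasible and $z^{*}\in\ZZ^{n}$ is feasible with $cz^{*}>cz$, then there is a feasible $z'\in\ZZ^{n}$ with $cz'>cz$ and $\|z-z'\|_{\infty}<n\Delta$. Applying this with $z^{*}$ an optimal integral solution (which exists, since the maximum is attained by hypothesis — a rational unboundedness direction of the LP relaxation would scale to an integral improving direction) yields the theorem. Write $P=\{x\in\RR^{n}\mid Ax\le b\}$ and $y:=z^{*}-z\neq 0$, so that $cy>0$.

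For the induction, the base case is $\|y\|_{\infty}<n\Delta$, where we simply take $z'=z^{*}$. So assume $\|y\|_{\infty}\ge n\Delta$ and introduce the rational polyhedral cone
$$
C:=\{\,u\in\RR^{n}\mid u_{j}y_{j}\ge 0 \text{ for all } j,\ (A_{i}u)(A_{i}y)\ge 0 \text{ for all } i\,\}
$$
of vectors sign-compatible with $y$ both in their own coordinates and after multiplication by $A$. Then $y\in C$, and $C$ is pointed, so by Carath\'{e}odory's theorem $y=\sum_{k=1}^{t}\lambda_{k}g_{k}$ with $t\le n$, each $\lambda_{k}>0$, and each $g_{k}$ spanning an extreme ray of $C$. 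Every facet of $C$ lies on a hyperplane $u_{j}=0$ or $A_{i}u=0$, so each $g_{k}$ is, up to scaling, the vector of signed maximal minors of an $(n-1)\times n$ matrix whose rows are rows of $\begin{pmatrix}A\\ I\end{pmatrix}$; expanding such a minor along the rows coming from $I$ reduces it to a minor of $A$, so we may take $g_{k}\in\ZZ^{n}$ with $\|g_{k}\|_{\infty}\le\Delta$. Since $y=\sum_{k}\lambda_{k}g_{k}$ is a conformal sum, evaluating it in a coordinate $j$ of maximal absolute value gives $n\Delta\le\|y\|_{\infty}=\sum_{k}\lambda_{k}|(g_{k})_{j}|\le\Delta\sum_{k}\lambda_{k}$, hence $\sum_{k}\lambda_{k}\ge n\ge t$ and some $\lambda_{k_{0}}\ge 1$.

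The key feasibility fact is that whenever $\lambda_{k}\ge 1$, both $z+g_{k}$ and $z^{*}-g_{k}$ lie in $P\cap\ZZ^{n}$. Integrality is clear. For a row $i$ with $A_{i}y\le 0$ one has $A_{i}g_{k}\le 0$, so $A_{i}(z+g_{k})\le A_{i}z\le b_{i}$; and since $A_{i}y=\sum_{\ell}\lambda_{\ell}A_{i}g_{\ell}$ is a sum of nonpositive terms, $A_{i}y\le\lambda_{k}A_{i}g_{k}$, whence (using $\lambda_{k}\ge 1$, $A_ig_k\le 0$) $A_{i}(z^{*}-g_{k})=A_{i}z^{*}-A_{i}g_{k}\le A_{i}z+(\lambda_{k}-1)A_{i}g_{k}\le A_{i}z\le b_{i}$; the rows with $A_{i}y\ge 0$ are treated by the mirror-image computation. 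Now compare costs. Since $cy=\sum_{k}\lambda_{k}(cg_{k})>0$, if some $k$ with $\lambda_{k}\ge 1$ has $cg_{k}>0$ we take $z':=z+g_{k}$, which is feasible, integral, satisfies $cz'=cz+cg_{k}>cz$ and $\|z-z'\|_{\infty}=\|g_{k}\|_{\infty}\le\Delta<n\Delta$. Otherwise every "long" generator is non-improving, so replacing $z^{*}$ by $z^{*}-g_{k_{0}}$ keeps feasibility and integrality, keeps the cost above $cz$ (as $cg_{k_{0}}\le 0<cy$), and strictly decreases $\|\,\cdot\,-z\|_{1}$ (since $g_{k_{0}}$ is conformal to $y$ and $|(g_{k_{0}})_{j}|\le|y_{j}|$, so $\|y-g_{k_{0}}\|_{1}=\|y\|_{1}-\|g_{k_{0}}\|_{1}$); thus the induction hypothesis applies.

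The delicate parts, where I expect the real work to lie, are two. First, the estimate $\|g_{k}\|_{\infty}\le\Delta$, which rests on the observation that the subdeterminants of $\begin{pmatrix}A\\ I\end{pmatrix}$ of every size are bounded by those of $A$, i.e.\ by $\Delta$. Second, keeping the running point feasible with respect to the constraints that are \emph{slack} at $z$: this is exactly where $\lambda_{k}\ge 1$ and the conformality of the Carath\'{e}odory decomposition must be used in tandem, and any looseness there breaks the argument. The factor $n$ in the bound is precisely the Carath\'{e}odory count $t\le n$ — a single generator moves us only $\Delta$, but we may have to recombine up to $n$ of them.
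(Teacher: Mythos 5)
The paper offers no proof of this statement at all --- it is quoted verbatim from Schrijver's book, where it is the Cook--Gerards--Schrijver--Tardos sensitivity theorem --- so there is no internal argument to compare yours against. What you have written is, in essence, the standard published proof: decompose $y=z^{*}-z$ inside a sign-compatible cone into at most $n$ integral extreme-ray generators of infinity-norm at most $\Delta$ via Cramer's rule, observe that $\|y\|_{\infty}\ge n\Delta$ forces some coefficient $\lambda_{k_{0}}\ge 1$, and then either step from $z$ by an improving generator or pull $z^{*}$ back by a non-improving one and induct. The feasibility argument for $z+g_{k}$ and $z^{*}-g_{k}$, the $\Delta$-bound on the generators via minors of $\bigl(\begin{smallmatrix}A\\ I\end{smallmatrix}\bigr)$, and the induction on the integer $\|z^{*}-z\|_{1}$ are all correct in substance.

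One step fails as literally written: the cone $C=\{u\mid u_{j}y_{j}\ge 0,\ (A_{i}u)(A_{i}y)\ge 0\}$ imposes \emph{no} condition on $u_{j}$ when $y_{j}=0$, nor on $A_{i}u$ when $A_{i}y=0$, because the product inequalities are then vacuous. Consequently $C$ need not be pointed (so the extreme-ray Carath\'eodory step is not available as stated), the deduction ``$A_{i}y\le 0$ implies $A_{i}g_{k}\le 0$'' is unjustified precisely for rows with $A_{i}y=0$ --- and such rows are not rescued by the mirror-image case either --- and the conformality identity $\|y-g_{k_{0}}\|_{1}=\|y\|_{1}-\|g_{k_{0}}\|_{1}$ can fail in coordinates with $y_{j}=0$. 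The repair is local: fix a definite sign per row and per coordinate, e.g.\ require $u_{j}\ge 0$ if $y_{j}\ge 0$ and $u_{j}\le 0$ if $y_{j}<0$, and likewise $A_{i}u\ge 0$ if $A_{i}y\ge 0$ and $A_{i}u\le 0$ if $A_{i}y<0$. Then $C$ lies in a closed orthant and is pointed, every sign deduction goes through verbatim, and in a coordinate with $y_{j}=0$ all $(g_{k})_{j}$ share a sign and sum to zero, hence vanish, so conformality holds. (A cosmetic point: the improving step yields $\|z-z'\|_{\infty}\le\Delta$, which is strictly less than $n\Delta$ only when $n\ge 2$; the case $n=1$ should be dispatched separately, or the bound read as $\le n\Delta$ as in Schrijver.)
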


For irrational matrices the existence of test sets is more
subtle. This issue has been addressed in Scarf's theory of neighbors
\cite{BaranyScarf1998}.
The following simple lemma will be applied in \S \ref{SS3}.

\begin{Lemma}\label{Lemma:unimod}
  Let $U\in \GL_m(\ZZ)$. Then $T\subset \ZZ^m$ is a test set for
  $\IP_{A,c}$ if and only if $U^{-1} T$ is a test set for $\IP_{A U, c U}$.
\end{Lemma}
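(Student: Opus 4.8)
The plan is to unravel the definitions on both sides and observe that the change of variables $x \mapsto U x$ gives a bijection between the feasible sets of $\IP_{A,c}(b)$ and $\IP_{AU, cU}(b)$ that preserves optimality, and that this same change of variables carries $T$ to $U^{-1}T$. First I would note the elementary fact that for any $b \in \RR^m$ we have $P_{AU}(b) = U^{-1} P_A(b)$, since $AU x \le b \iff A(Ux) \le b$; moreover, because $U \in \GL_m(\ZZ)$, multiplication by $U^{-1}$ is a bijection of $\ZZ^m$ onto itself, so $P_{AU}(b) \cap \ZZ^m = U^{-1}(P_A(b) \cap \ZZ^m)$. Also the cost is matched: $(cU)^t x = c^t(Ux)$, so $x$ is optimal for $\IP_{AU,cU}(b)$ iff $Ux$ is optimal for $\IP_{A,c}(b)$.

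Next I would verify condition (a) of the definition of a test set: for $v \in T$ we have $(cU)(U^{-1}v) = c v < 0$, so $U^{-1}T$ satisfies (a) for $\IP_{AU,cU}$ exactly when $T$ satisfies it for $\IP_{A,c}$; and $U^{-1}T \subset \ZZ^m$ is finite since $T$ is. Then for condition (b), fix $b$ and a feasible integral point $w \in P_{AU}(b) \cap \ZZ^m$, and set $v_0 = Uw \in P_A(b) \cap \ZZ^m$. Using the test set property of $T$ for $\IP_{A,c}(b)$, the point $v_0$ is optimal iff $v_0 + v \notin P_A(b)$ for all $v \in T$. Translating through $U^{-1}$: $v_0 + v \notin P_A(b)$ iff $w + U^{-1}v \notin P_{AU}(b)$, and as $v$ ranges over $T$, $U^{-1}v$ ranges over $U^{-1}T$. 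Combining with the optimality correspondence above ($w$ optimal for $\IP_{AU,cU}(b)$ iff $v_0 = Uw$ optimal for $\IP_{A,c}(b)$) yields exactly the statement that $w$ is optimal iff $w + v' \notin P_{AU}(b)$ for all $v' \in U^{-1}T$. This establishes that $U^{-1}T$ is a test set for $\IP_{AU,cU}$; the converse direction follows by symmetry, replacing $U$ by $U^{-1}$ (noting $U^{-1} \in \GL_m(\ZZ)$ as well).

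There is no real obstacle here — the lemma is purely formal. The only point requiring a moment's care is that the bijection $x \mapsto Ux$ must restrict to a bijection on lattice points, which is precisely where the hypothesis $U \in \GL_m(\ZZ)$ (as opposed to merely $U \in \GL_m(\QQ)$ or $\GL_m(\RR)$) is used; I would make sure to flag this explicitly. I expect the write-up to be short, essentially a chain of ``iff'' equivalences checked coordinate by coordinate.
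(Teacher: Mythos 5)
Your proof is correct and complete; the paper itself offers no proof of this lemma (it is introduced only as a ``simple lemma''), and your change-of-variables argument via $P_{AU}(b)=U^{-1}P_A(b)$, the matching of costs, and the bijection of lattice points is exactly the intended one. You rightly flag that $U\in\GL_m(\ZZ)$ (rather than $\GL_m(\QQ)$) is what makes $x\mapsto Ux$ restrict to a bijection of integral points, which is the only non-formal ingredient.
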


\subsection{Conversion to group programs}\label{sect:alg}

Suppose that $A$ is an integral $m\times n$ matrix and $b$ an integral
(column) $m$-vector.  The integer program $\IP_{A, c}(b)$ may be
transcribed in the following way. A feasible solution $x\in \ZZ^n$ to
$\IP_{a, c}(b)$ corresponds to a vector $u\in \NN^ m$ with $A x + u =
b$. If we let $\cL$ denote the subgroup of $\ZZ^m$ generated by the
columns of $A$, this means that $\IP_{A, c}(b)$ may be formulated as
the \emph{group program} $\Grp_{\cL, c'}(b)$:
\begin{align}
&\hbox{min\ } (c')^t u\nonumber \\ 
&u\equiv b\pmod \cL\label{grp}\\ 
&u\in \NN^n\nonumber,
\end{align}
where $c'\in \RR^m$ is some cost vector corresponding to $c\in \RR^n$
(recovering $x$ from $A x + u = b$). In complete analogy with $\IP_{A,
  c}(b)$ we have the following definition.

\begin{Definition}\label{defdef}
A test set for the family $\{\Grp_{\cL, c'}(b) \mid b\in \ZZ^m\}$ of
group programs is a finite set $T\subset \cL$ such that
\begin{enumerate}[(a)]
\item\label{defa}
$(c')^t v < 0$ for every $v\in T$.
\item A vector $u_0\in \NN^n$ with $u_0\equiv b\pmod \cL$ is optimal
  if and only if 
$$
u_0 + v\not\in \NN^n
$$ 
for every $v\in T$.
\end{enumerate}
\end{Definition}

We use the term group program (as opposed to lattice program) since the
optimization problem \eqref{grp} concerns optimizing a function over
certain representatives of a coset in the group $\ZZ^n/\cL$.

In the following we recall how test sets for group programs may be
constructed using Hilbert bases for rational cones. This gives in
principle an algorithm for computing the test set alluded to in
Theorem \ref{Theorem:Schrijver}.

\subsection{Hilbert bases and test sets for group programs}

A pointed polyhedral cone $C\subset \RR^n$ carries a natural partial
order $\prec$ given by
$$
x\prec y\iff y - x \in C
$$ 
for $x, y\in C$. If $C$ is rational and $\cL$ is a
subgroup of $\ZZ^n$, then the semigroup
$\cL\cap C$ has finitely many $\prec$-minimal elements (see
\cite[\S16.4]{Schrijver1986}). These elements are called the {\it
Hilbert basis\/} of $\cL\cap C$ and denoted
$\Hilb(\cL\cap C)$. Every element of $\cL\cap C$ is a finite
non-negative integral linear combination of the elements in
$\Hilb(\cL\cap C)$.

The Euclidean space $\RR^m$ decomposes into the $2^m$ orthants
$\{O_j\}_{j=1}^{2^m}$, which are pointed rational cones. For a lattice
$\cL \subset \ZZ^m$ we put
$$
\Gr(\cL) = \bigcup_{j = 1}^{2^m} \Hilb(\cL \cap O_j).
$$
This finite set of vectors is called the \emph{Graver basis} of $\cL$.
It is not too difficult to prove that $\{v\in \Gr(\cL)\mid c^t v <
0\}$ is a test for the family $\{\Grp_{\cL, c}(b) | b\in \ZZ^m\}$ of
group programs. The full Graver basis is a test set for
the bigger family $\{\Grp_{\cL, c}(b)\mid c\in \RR^m, b\in \ZZ^m\}$ of
group programs, where both $b$ and $c$ are allowed to vary.

\subsection{Generic cost vectors and Gr\"obner bases}

Sufficiently generic cost vectors $c\in \RR^m$ may be viewed as term
orders in computational algebra. The term sufficiently generic
includes the case of linearly independent entries over $\QQ$ in $c$.
In practice we want $c^t v \neq 0$ for $v\in \cL$ inside a sufficiently
big ball centered at $0$. Usually the cost vector is made generic by
breaking ties with a term order $\prec$. We will not make this precise
here but refer the reader to the reference given at the end of this
subsection.

 The \emph{lattice ideal} $I_\cL$ associated to $\cL$ is defined as the ideal
generated by the binomials
$$
\{u^{v^+}-u^{v^-}\mid v\in \cL\}\subset \QQ[u_1, \dots, u_m]
$$
where $v\in\ZZ^m$ is decomposed into vectors $v^+, v^-\in \NN^m$ with
disjoint support such that $v = v^+ - v^-$ and 
$u^w$ denotes the monomial $u_1^{w_1}\cdots
u_m^{w_m}$ for
$w = (w_1, \dots,
w_n)\in \NN^m$. In this context we have the following result.

\begin{Theorem}\label{Theorem:GB}
Let $c$ be a sufficiently generic vector in $\RR^m$.
Then a minimal Gr\"obner basis of $I_\cL$ with respect to the
term order given by $-c$ is 
$$
\{u^{v^+} - u^{v^-}\mid v\in T\},
$$ 
where $T\subset \cL$ is a test set for the family $\{\Grp_{\cL, c}(b)
\mid b\in \ZZ^m\}$ of group programs.
\end{Theorem}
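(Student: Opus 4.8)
The plan is to deduce the theorem from the classical dictionary between Gr\"obner bases of lattice ideals and integer optimization, the upshot of which is that the standard monomials of a Gr\"obner basis of $I_\cL$ are exactly the optimal solutions of the group programs $\Grp_{\cL,c}(b)$. First I would collect the routine structural facts: since the $S$-polynomial of two binomials is a binomial and cancelling a common monomial factor stays in the binomial world, Buchberger's algorithm shows every reduced Gr\"obner basis of $I_\cL$ consists of binomials $u^{v^+}-u^{v^-}$ with $v\in\cL$; and $u^a\equiv u^b\pmod{I_\cL}$ exactly when $a-b\in\cL$, so the classes of monomials modulo $I_\cL$ are the sets $(b+\cL)\cap\NN^m$. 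These classes are finite under the hypothesis $\cL\cap\NN^m=\{0\}$ implicit in this setting --- which is also what makes ``the term order given by $-c$'' a genuine term order on the $\ZZ^m/\cL$-homogeneous ideal $I_\cL$ and the group programs bounded --- and genericity of $c$ then singles out a \emph{unique} cost-minimal vector in each class, i.e.\ a unique optimum of $\Grp_{\cL,c}(b)$.

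Next I would establish the core correspondence for a fixed minimal Gr\"obner basis $\cG=\{u^{v_i^+}-u^{v_i^-}\mid i=1,\dots,s\}$, $v_i\in\cL$. Observation (i): the leading term of $u^{v_i^+}-u^{v_i^-}$ is $u^{v_i^-}$ --- this is exactly the content of $c^tv_i<0$ --- so a one-step reduction by the $i$-th element applies to $u^{u_0}$ precisely when $u^{v_i^-}\mid u^{u_0}$, i.e.\ $u_0\ge v_i^-$ componentwise, which by disjointness of $\supp(v_i^+)$ and $\supp(v_i^-)$ is the same as $u_0+v_i\in\NN^m$; the reduction then sends $u^{u_0}$ to $u^{u_0+v_i}$, so in particular reduction strictly decreases $c^tu$. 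Observation (ii): the standard monomials of $\cG$ are one per residue class, each being the least monomial of its class in the term order; since that order refines the $c$-cost order on each finite class and $c$ is generic, the least monomial is the unique cost-minimizer, i.e.\ the optimal solution. Chaining (i) and (ii): $u^{u_0}$ is a standard monomial $\iff$ $u^{u_0}$ is irreducible by $\cG$ $\iff$ $u_0+v_i\notin\NN^m$ for all $i$ $\iff$ $u_0$ is optimal.

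The theorem then follows in both directions. With $T:=\{v_1,\dots,v_s\}$, condition (a) of the definition of a test set is the inequality $c^tv_i<0$ from (i) and condition (b) is the chain of equivalences at the end of the preceding paragraph, so $T$ is a test set and $\cG=\{u^{v^+}-u^{v^-}\mid v\in T\}$ as sets of binomials (up to sign). Conversely, given any test set $T$, put $G:=\{u^{v^+}-u^{v^-}\mid v\in T\}\subset I_\cL$; by (i) the monomials divisible by no leading term of $G$ are exactly those $u_0$ with $u_0+v\notin\NN^m$ for all $v\in T$, which by (b) are the optimal solutions, one per residue class, hence exactly the standard monomials of $I_\cL$; so the leading terms of $G$ generate $\ini_{-c}(I_\cL)$ and $G$ is a Gr\"obner basis of $I_\cL$ (whence also $\langle G\rangle=I_\cL$). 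Finally, deleting any $v\in T$ whose leading term $u^{v^-}$ is divisible by another --- equivalently, whose test condition is implied by another --- makes $G$ minimal and $T$ a minimal test set, in agreement with the minimal basis of the first part.

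I expect the one genuinely delicate point to be observation (ii) and the sign bookkeeping around it: one must fix the convention under which ``the term order given by $-c$'' is a bona fide term order --- this is where $\cL\cap\NN^m=\{0\}$ and the $\ZZ^m/\cL$-homogeneity of $I_\cL$ enter, so that the weight preorder by $-c$ is already total on each finite residue class --- and then check that its least monomial in a class is the cost-\emph{minimizer} and not the maximizer, equivalently that reduction lowers $c^tu$; getting this backwards would prove the corresponding statement about the maximization problem. Everything else is a routine unwinding of Buchberger-style arguments for binomial ideals.
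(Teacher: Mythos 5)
The paper does not actually prove Theorem \ref{Theorem:GB}; it is quoted as a known result, with the reader pointed to Thomas's survey \cite{Thomas1998} for the dictionary between Gr\"obner bases of lattice ideals and test sets. Your argument is precisely that standard dictionary (Conti--Traverso, Sturmfels--Thomas), and it is correct and essentially complete: the chain ``$u^{u_0}$ standard $\iff$ irreducible $\iff$ $u_0+v\notin\NN^m$ for all $v$ $\iff$ $u_0$ optimal'' is exactly what identifies the binomial minimal Gr\"obner basis with a test set, and your converse (the leading terms built from any test set cut out precisely the standard monomials, hence generate the initial ideal, hence give a Gr\"obner basis of $I_\cL$) is the right way to obtain both inclusions. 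The two points you flagged are indeed the ones needing care, and you resolved both correctly: the leading term must be $u^{v^-}$, so that reduction is the improving move $u_0\mapsto u_0+v$ and the normal form is the cost \emph{minimizer} --- this is the reading consistent with the paper's later description of the reverse lexicographic order via $u\prec v\iff c'(v-u)>0$, where the phrase ``term order given by $-c$'' is to be understood as ``normal forms maximize $-c$,'' i.e.\ minimize $c$; and the finiteness of the fibers $(b+\cL)\cap\NN^m$, equivalently $\cL\cap\NN^m=\{0\}$, is the implicit standing hypothesis that makes the group programs bounded and the order well defined fiberwise (in the paper's application it is guaranteed by the positive relation $yA=0$ with $y\in\NN_{>0}^{d+1}$). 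One cosmetic caveat: minimal Gr\"obner bases are not unique and need not consist of binomials, so the statement should be read as exhibiting one such basis (e.g.\ the reduced one, which is automatically binomial); your proof does exactly that.
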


This result is so far the most efficient way of computing test sets
for integer programs: the algebraic viewpoint makes it possible to
apply highly optimized algorithms for computing Gr\"obner bases of
lattice ideals. A state of the art implementation is in the program
{\tt 4ti2} [\url{http://www.4ti2.de}], which also contains functions for
computing and manipulating test sets for integer programs.

For further information on the relation between Gr\"obner bases and
test sets we refer the reader to \cite{Thomas1998}.

\section{Maximal lattice free bodies}

A \emph{convex body} is a compact convex subset of $\RR^n$ with
non-empty interior. We call a convex body \emph{lattice free} if its
interior does not contain any integral points. For a beautiful
exposition of lattice free bodies we refer to \S3 in the survey by Lovasz
\cite{Lovasz1988}.

It is known that a maximal lattice free convex body $B$ is a polytope
\cite[Proposition 3.2]{Lovasz1988}. A surprising result due to Bell
and Scarf says that $B$ has at most $2^n$ facets
\cite[\S16.5]{Schrijver1986}. A very useful characterization of
maximal lattice free bodies is contained in the following result.

\begin{Proposition}\label{Proposition:crucial}
  A polytope is maximal lattice free if and only if its interior does
  not contain any integral points and every facet contains an integral
  point in its relative interior.
\end{Proposition}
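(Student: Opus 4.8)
The plan is to prove both implications directly, using the well-known description of maximal lattice free bodies as polytopes (cited above) so that we may assume from the start that the object in question is a polytope $P$. For the "only if" direction, suppose $P$ is maximal lattice free. By definition its interior contains no integral point, so only the second condition needs proof. Fix a facet $F$ of $P$, lying on a supporting hyperplane $H = \{x \mid a^t x = \beta\}$ with $P \subseteq \{a^t x \le \beta\}$; I would argue by contradiction, assuming $F$ contains no integral point in its relative interior. The idea is to push the hyperplane $H$ outward slightly: replace $\beta$ by $\beta + \varepsilon$ for small $\varepsilon > 0$, obtaining a strictly larger polytope $P'$. Because $F$ had no integral point in its relative interior, for $\varepsilon$ small enough the newly added region $P' \setminus P$ contains no integral point in the interior of $P'$ either — here one uses that the only integral points near $F$ are at positive distance from $\operatorname{relint} F$, together with compactness to choose $\varepsilon$ uniformly. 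Then $P'$ is a strictly larger lattice free body, contradicting maximality. The step requiring care is the compactness/uniformity argument guaranteeing that a single $\varepsilon$ works; one must also handle the case where an integral point sits on the boundary of $F$ (on a lower-dimensional face), checking that after perturbation such a point does not enter the interior of $P'$.

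For the "if" direction, suppose $P$ is a polytope whose interior contains no integral point and each of whose facets carries an integral point in its relative interior. We must show $P$ is maximal among lattice free convex bodies. Suppose not: then there is a lattice free convex body $B$ with $P \subsetneq B$. Since $P \subsetneq B$ and both are full-dimensional convex sets, there is a point $p \in B \setminus P$, hence a facet inequality $a^t x \le \beta$ of $P$ that $p$ violates strictly, $a^t p > \beta$. Let $F$ be the corresponding facet and let $q$ be an integral point in $\operatorname{relint} F$. I would now show that $q$ lies in the interior of $B$: since $q$ is in the relative interior of $F$, a small relative neighborhood of $q$ within $H = \{a^t x = \beta\}$ lies in $P \subseteq B$, and combining points of that neighborhood (on the $\beta$-side) with the point $p$ (strictly on the other side, $a^t p > \beta$) via convexity of $B$ produces a full-dimensional neighborhood of $q$ inside $B$. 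Hence $q \in \operatorname{int} B \cap \ZZ^n$, contradicting that $B$ is lattice free.

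The main obstacle I anticipate is purely in the first direction: making the perturbation argument airtight, i.e. choosing $\varepsilon$ so that enlarging $P$ across the facet $F$ genuinely produces no new interior integral point. The subtlety is that integral points may lie on $\operatorname{relbd} F$ or arbitrarily close to it in the ambient space while still avoiding $\operatorname{relint} F$; one resolves this by noting there are only finitely many integral points within any bounded neighborhood of $P$, so one may take $\varepsilon$ smaller than the distance from $\operatorname{relint} F$ to the (finite) set of those integral points not lying in $\operatorname{relint} F$, and separately verify that points on $\operatorname{relbd} F$ stay on the boundary of the enlarged polytope because the other facet inequalities of $P$ are unchanged. The second direction, by contrast, is a clean convexity argument with no genuine difficulty.
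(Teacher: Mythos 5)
The paper itself offers no proof of this proposition: it is stated as a known characterization (the Lov\'asz survey is cited just above for the related fact that maximal lattice free bodies are polytopes), so your argument is necessarily independent of the paper's, and it is essentially the correct, standard one. The ``only if'' direction is sound: relaxing one facet inequality from $\beta$ to $\beta+\varepsilon$, the only candidates for new interior integral points are those $z$ with $a^t z=\beta$ and all other facet inequalities strict --- i.e.\ $z\in\operatorname{relint}F$, which you have excluded --- or those with $\beta<a^t z<\beta+\varepsilon$, which are ruled out for small $\varepsilon$ by the finiteness of lattice points near $P$; your observation that integral points on the relative boundary of $F$ stay non-interior because the other inequalities are retained is exactly the right point, and you might add the one-line check that the relaxed polytope is still bounded (its recession cone is unchanged). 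In the ``if'' direction there is one small imprecision: the convex hull of $p$ with a relative neighborhood $N\subseteq H$ of $q$ lies entirely in the half-space $a^t x\ge\beta$, so by itself it yields only the outward half of a ball around $q$ and does not place $q$ in the interior of $B$. The inward half must come from $P$ itself: since $q\in\operatorname{relint}F$, every other facet inequality is strict at $q$, so a small ball around $q$ intersected with $\{x\mid a^t x\le\beta\}$ lies in $P\subseteq B$. The union of the two half-balls is a full neighborhood of $q$ contained in $B$, which gives the desired contradiction; with that sentence added your proof is complete.
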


Clearly, $\ZZ^n$ acts on the set of maximal lattice free bodies in
$\RR^n$ by translation. We let $\cM(A)$ denote the set of maximal
lattice free bodies of the form $P_A(b)$, where $A$ is an $m\times n$
matrix and $b\in \RR^m$ is the right hand side. These maximal lattice
free bodies are the ones with facet normals in the rows of $A$. For
varying $b$, several of the rows in $A$ may define inactive facets.
The $\ZZ^n$-action on maximal lattice free bodies restricts to an
action on $\cM(A)$ by
$$
z + P_A(b) = P_A(b + A z).
$$ 

It is reasonable to identify maximal lattice free bodies which
differ by an integral translation. With this convention we
have the following result.

\begin{Theorem}\label{Theorem:fini}
  Let $A$ be an integral $m\times n$ matrix.  Up to integral
  translation, $A$ has finitely many maximal lattice free bodies i.e.
$$
\cM(A)/\ZZ^n
$$
is a finite set.
\end{Theorem}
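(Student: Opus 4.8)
The plan is to reduce the finiteness of $\cM(A)/\ZZ^n$ to a statement about the right-hand side vectors $b$, and then to bound those vectors modulo the lattice $A\ZZ^n$. Fix a maximal lattice free body $P_A(b)\in\cM(A)$. By Proposition~\ref{Proposition:crucial}, every active facet of $P_A(b)$ contains an integral point in its relative interior; in particular, for each row index $i$ whose inequality $a_i^t x\le b_i$ is active, there is some $x_i\in\ZZ^n$ with $a_i^t x_i=b_i$, so $b_i=a_i^t x_i\in a_i^t\ZZ^n\subset\ZZ$. For the inactive rows we may harmlessly lower $b_i$ to the value $\max\{a_i^t x : x\in P_A(b)\}$, or simply to an integer without changing the polytope; so, after this normalization, $b\in\ZZ^m$ and $P_A(b)$ is unchanged.

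Next I would use the $\ZZ^n$-action $z+P_A(b)=P_A(b+Az)$ to normalize $b$ modulo the sublattice $A\ZZ^n\subset\ZZ^m$. Two bodies $P_A(b)$ and $P_A(b')$ with $b\equiv b'\pmod{A\ZZ^n}$ differ by an integral translation, hence represent the same class in $\cM(A)/\ZZ^n$. So it suffices to show that only finitely many residues $\bar b\in\ZZ^m/A\ZZ^n$ can arise from a maximal lattice free body. Equivalently, I want to bound $b$ componentwise (modulo $A\ZZ^n$) using only the defining property. Here is where the geometry enters: a lattice free body has bounded volume — indeed, a classical estimate (or just the fact that a convex body with no interior lattice point, after applying a unimodular transformation, fits in a bounded region, e.g.\ via the flatness theorem, or more elementarily via Proposition~\ref{Proposition:crucial} forcing each facet to pass through a lattice point) shows that $P_A(b)$, being lattice free, has a diameter bounded in terms of $n$ alone, independently of $b$. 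Since each active facet contains a lattice point and the polytope has bounded diameter, translating by a suitable $z\in\ZZ^n$ we can assume one such lattice point is within a fixed box around the origin; then boundedness of the diameter forces the whole polytope, and hence every $b_i=\max\{a_i^t x : x\in P_A(b)\}$, into a bounded range depending only on $A$ and $n$. Only finitely many integer vectors $b$ lie in that range, so $\cM(A)/\ZZ^n$ is finite.

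The main obstacle is the step asserting a uniform diameter bound for lattice free bodies of the form $P_A(b)$: a priori such a polytope could be long and thin in a direction not "seen" by the rows of $A$ in a bounded way. I expect this to be handled by invoking that a \emph{maximal} lattice free body is a polytope with at most $2^n$ facets (Bell–Scarf, quoted above) together with Proposition~\ref{Proposition:crucial}: each of its (at most $2^n$) facets carries a relative-interior lattice point, so the polytope is the convex hull of a bounded neighborhood of finitely many lattice points that are themselves "close" to each other because the interior contains no lattice point — this is exactly the content of the flatness/Khinchine-type bounds for lattice free convex bodies referenced via \cite{Lovasz1988}. Once the diameter bound is in hand, the counting argument above is routine. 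An alternative, perhaps cleaner, route avoiding quantitative bounds would be a compactness argument: the set of all lattice free polytopes with normals among the rows of $A$, taken up to translation, sits inside a bounded region of the space of right-hand sides modulo $A\ZZ^n$; if this were infinite it would have an accumulation point, producing a limiting polytope that is still lattice free and has an interior point — contradiction — but making "accumulation" precise still ultimately requires the same boundedness, so I would present the direct diameter-bound argument.
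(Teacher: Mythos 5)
There is a genuine gap at exactly the step you flag as the main obstacle: the claimed uniform diameter bound for lattice free bodies is false, and nothing in the sketch recovers from this. The flatness theorem bounds the \emph{lattice width} of a lattice free convex body (its extent in some integral dual direction), not its diameter; a lattice free body can be arbitrarily long transversally to the flat direction. Restricting to \emph{maximal} lattice free bodies does not help: applying the unimodular shear $(x,y)\mapsto(x+Ny,y)$ to the maximal lattice free triangle $\conv\{(0,0),(2,0),(0,2)\}$ yields the maximal lattice free triangle $\conv\{(0,0),(2,0),(2N,2)\}$, whose diameter grows like $2N$. So no bound ``in terms of $n$ alone'' exists, already for $n=2$; and the same example shows the boundary lattice points of a maximal lattice free body need not be close to one another. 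What your argument actually requires is a diameter bound for maximal lattice free $P_A(b)$ that is uniform in $b$ for \emph{fixed} $A$. That statement is true, but it is essentially equivalent to the theorem being proved, the constant must genuinely depend on $A$ (as the shear example shows), and neither the flatness citation nor the compactness alternative you mention supplies it. Your opening reductions (integrality of $b_i$ on active facets, and working modulo $A\ZZ^n$) are fine, but note also that $\ZZ^m/A\ZZ^n$ is infinite for $m>n$, so everything rests on the missing boundedness.

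For comparison, the paper's proof avoids any geometric size estimate. After normalizing so that $b_1=0$ and $0$ lies in the relative interior of the $a_1$-facet, it takes a finite test set $T$ (Theorem~\ref{Theorem:Schrijver}) for the integer programs $\min\{a_1x\mid Ax\le b,\ x\in\ZZ^n\}$. Decreasing every $b_i$ ($i\ge 2$) by a small $\epsilon$ makes $0$ optimal (no interior lattice points), while leaving $b_j$ untouched makes $0$ non-optimal (the $a_j$-facet has a relative-interior lattice point, by Proposition~\ref{Proposition:crucial}); the test set definition then yields $z_j\in T$ with $a_jz_j=b_j$. Hence every coordinate of $b$ is read off from an element of the finite set $T$, and finiteness of $\cM(A)/\ZZ^n$ follows by counting subsets of $T$. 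Some mechanism of this kind, pinning $b$ down to finitely many possibilities, is what your proposal is missing.
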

\begin{proof}
  Let $P_A(b)\in \cM(A)$. We may assume without loss of generality
  that all facets of $P_A(b)$ are active. Let $a_1, \dots, a_m$ denote
  the rows of $A$. By Proposition \ref{Proposition:crucial} and
  integral translation we may also assume that the facet corresponding
  to $a_1$ contains $0$ in its relative interior. With these
  reductions, we may assume that $b = (b_1, \dots, b_m)^t$ with $b_1 =
  0$ and $b_i > 0$ for $i>1$. We now prove that that there can only be
  finitely many maximal lattice free bodies of the form $P_A(b)$.

  Let $T$ be the test set in Theorem \ref{Theorem:Schrijver}
  associated with the matrix $A$.  Consider the integer program
\begin{equation}\label{eq:ip}
\min\{a_1 x \mid A x \leq b, x\in \ZZ^n\},
\end{equation}
Let $\epsilon > 0$ be sufficiently small.  Since $P_A(b)$ has no
interior integral points, the integer program given by changing the
right hand side in \eqref{eq:ip} according to $b_2 := b_2-\epsilon,
\dots, b_m := b_m-\epsilon$ has $0$ as optimal solution. On the other
hand $0$ is a feasible but not optimal solution for the integer
program in \eqref{eq:ip} when the right hand side is changed according
to $b_2:= b_2-\epsilon, \dots, b_j := b_j, \dots, b_m :=
b_m-\epsilon$, since $P_A(b)$ contains an integral point in the
relative interior of the $a_j$-facet.  By definition of a test
set, there exists $z_j\in T$ with $a_{1} z_j < 0, a_{i} z_j < b_i$ for
$j\neq i$ and $a_{j} z_j = b_j$.  This shows that a certain subset
$\{z_2, \dots, z_r\}$ with $r-1$ elements of $T$ uniquely defines
$P_A(b)$ up to integral translation.  Since there are finitely many
such subsets, this proves the claim.
\end{proof}

\begin{Example}\label{Example:pat}
The $4\times 2$ matrix
$$ 
A=\left(\begin{array}{rr}
1 & 0\\
-1 & 0\\
0 & 1\\
0 & -1
\end{array}\right).
$$
has $\cM(A) = \emptyset$.
\end{Example}

An interesting question is when $\cM(A)\neq \emptyset$? This is known
to hold if $A$ has full rank and every $m\times m$ minor is
non-vanishing. This condition obviously fails in Example \ref{Example:pat}.

\subsection{The simplicial case}

The proof of Theorem \ref{Theorem:fini} shows that $\cM(A)/\ZZ^n$ can
be computed using a test set for the family
$$
\{\IP_{A, a_1}(b)\mid b\in \RR^m\}
$$
of integer programs, where $a_1$ denotes the first row of $A$.  We
will describe an algorithm for doing this in the more tractable case
related to the Frobenius problem: assume that $A$ is an integral
$(d+1)\times d$ matrix of full rank such that $y A = 0$, for some
$y\in \NN_{>0}^{d+1}$. If $a_0, \dots, a_d$ are the rows of $A$ and $y
= (y_0,\dots, y_d)^t$, then $y_0 a_0 +\cdots + y_d a_d = 0$.

These assumptions imply that every $d\times d$ minor of $A$ is
non-zero and every $K_b := P_A(b)$ is either empty, containing one
point or a full dimensional simplex in $\RR^d$.  The advantage of
being in this setting, is that all facets in $A$ are active in a maximal
lattice free body: let
$$ 
F_i(b) = \{x\in K_b \mid a_i x = b_i\}
$$
denote the facets of a maximal lattice free body $K_b$, where $i =
0,\dots, d$. By Proposition \ref{Proposition:crucial} we may find an
integral point in the relative interior of $F_0(b)$. We may therefore
assume by translation that $0$ is in the relative interior of
$F_0(b)$. Unless the matrix is sufficiently generic, there may be
several choices for this translation as the following example shows.

\begin{Example}\label{Example:nongen}
The $\ZZ^2$-orbit of the maximal lattice free triangle
$$
\left\{\vect{x\\y}\in \RR^2 \left| 
\begin{array}{rrrr}
-x & + &2 y &\leq 0\\
x  &  - &3 y &\leq 1 \\
2x &   - &y &\leq 5
\end{array}
\right.
\right\}
$$
has two representatives containing $0$ in the relative interior of
$F_0(b)$:
\vspace{0.5cm}
$$
\epsfig{file=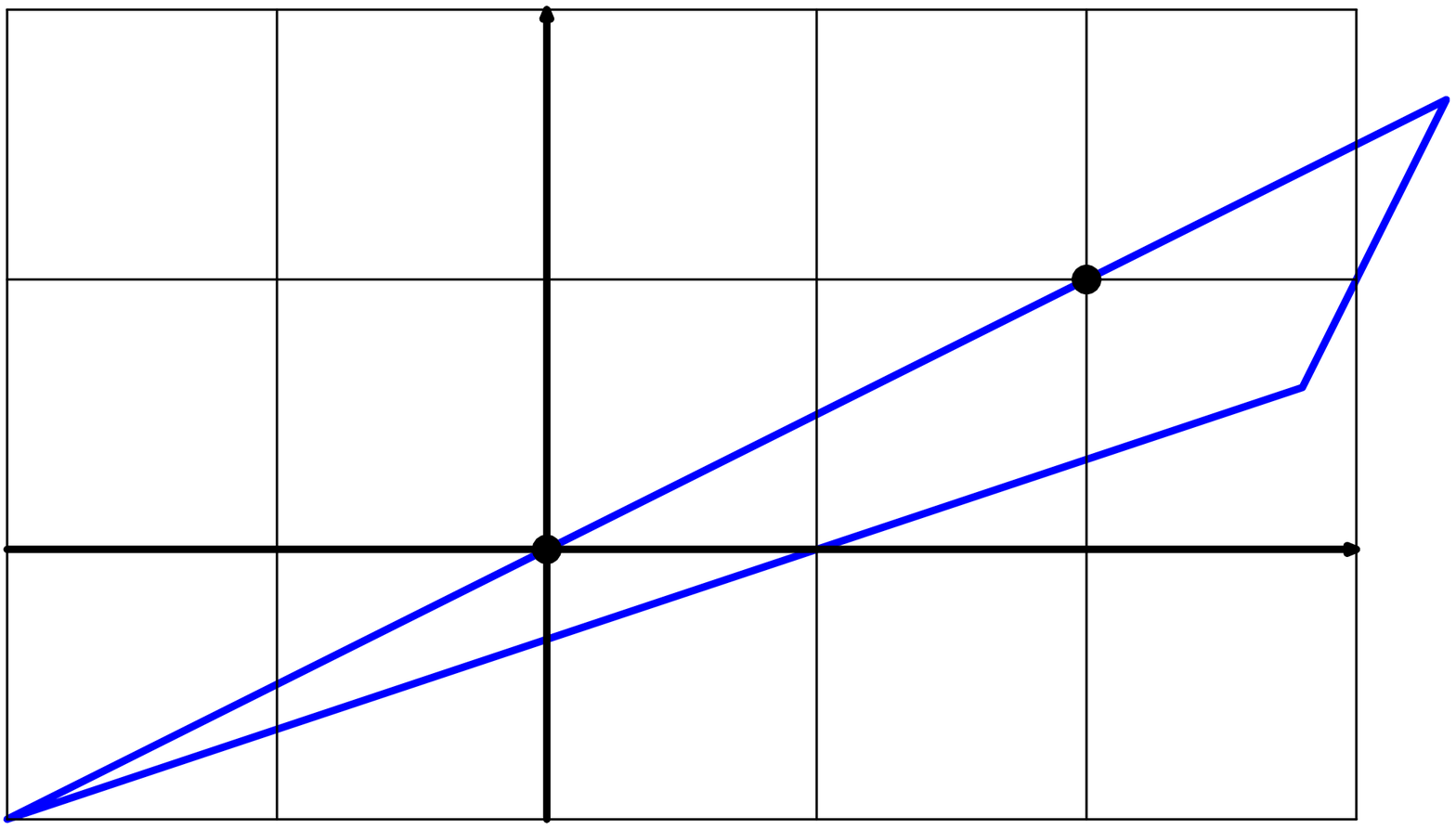,height=7cm}
$$
\vspace{0.5cm}
%\includegraphics[height=7cm, width=5cm]{nongen.pdf}
%$$
\end{Example}

\subsection{Test sets and maximal lattice free bodies}

We are interested in computing a well defined representative of a
maximal lattice free body in $\cM(A)/\ZZ^d$. This means, for example,
that we have to choose between the two possible candidates in Example
\ref{Example:nongen}. To make a well defined choice of a representative
we use the following result.

\begin{Lemma}\label{Lemma:unique}
  Every maximal lattice free body is an integral translation of a
  unique maximal lattice free body $K_b$ with the following two properties:
\begin{enumerate}[(a)]
\item 
$b = (0, b_1, \dots, b_d)^t\in \NN^{d+1}$, where $b_1, \dots, b_d>0$.
\item
$0$ is the optimal solution of the integer program
$$
\min\{a_0' z \mid z\in \mathbb{Z}^d, a_1 z\leq b_1-1,
\dots, a_d z \leq b_d-1\},
$$
where
$$
a_0':=a_0 + \epsilon a_1 + \cdots + \epsilon^d a_d
$$
for $\epsilon>0$ small, is a perturbation of the first row vector
$a_0$ in $A$.
\end{enumerate}
\end{Lemma}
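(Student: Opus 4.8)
The plan is to split the statement into an existence part and a uniqueness part, both of which will follow from the general finiteness result (Theorem~\ref{Theorem:fini}), Proposition~\ref{Proposition:crucial}, and a careful bookkeeping of the remaining translational freedom. First I would establish that any maximal lattice free body can be brought into a form satisfying condition~(a). Given a maximal lattice free body $B$, it equals $P_A(b')$ for some $b'\in\RR^{d+1}$ (here one uses that in the simplicial setting all facet normals $a_0,\dots,a_d$ are needed, as explained in the paragraph preceding Example~\ref{Example:nongen}). By Proposition~\ref{Proposition:crucial}, the facet $F_0$ contains an integral point $p$ in its relative interior; translating by $-p$ we may assume $0\in\operatorname{relint}F_0$, i.e. $a_0 p = b_0$ becomes $b_0 = 0$ and $a_i\cdot 0 < b_i$ for $i\ge 1$ (strict, since $0$ is interior to $B$ when restricted off $F_0$—more precisely $0$ lies on no other facet, as $F_0$'s relative interior is disjoint from the other facets). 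Since $A$ and the translate are integral, $b\in\ZZ^{d+1}$; and $b_i>0$ for $i\ge1$ because $0$ is not on those facets. This gives condition~(a) for some integral translate.

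Next I would use condition~(b) to pin down the residual freedom. After achieving~(a), the only remaining integral translations $z\in\ZZ^d$ that preserve property~(a) are exactly those for which $z$ is again in the relative interior of the $F_0$-facet of the translated body; equivalently, $a_0 z = 0$, $a_i z < b_i$ for $i\ge 1$ — i.e. $z$ is an integral point of $B$ lying in $\operatorname{relint}F_0$. This is precisely the finite set of competing representatives illustrated in Example~\ref{Example:nongen}. The key observation is that $a_0' = a_0 + \epsilon a_1 + \cdots + \epsilon^d a_d$, for $\epsilon>0$ sufficiently small, is a cost vector that is constant ($=0$, after the normalization $b_0=0$) to first order on $F_0$ but strictly orders the lattice points of $\operatorname{relint}F_0$ according to the lexicographic tie-break $(a_1 z, a_2 z, \dots, a_d z)$: indeed on $\{z : a_0 z = 0\}$ we have $a_0' z = \epsilon\, a_1 z + \epsilon^2 a_2 z + \cdots + \epsilon^d a_d z$, which for small $\epsilon$ is lexicographically dominated by the sign of $a_1 z$, then $a_2 z$, and so on. Since the finitely many candidate translates are distinct integral points, they have distinct vectors $(a_1 z,\dots,a_d z)$ (the $d\times d$ submatrix formed by $a_1,\dots,a_d$ is non-singular by the full-rank hypothesis), hence distinct values of $a_0' z$ for $\epsilon$ small enough; so exactly one of them minimizes $a_0'$ over the feasible region $\{z\in\ZZ^d : a_i z \le b_i - 1,\ i=1,\dots,d\}$. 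That feasible region is exactly the set of integral points $z$ with $z$ in the (closed) region cut out off $F_0$, which contains all the competing representatives' translation vectors; one checks the minimizer is $0$ after re-translating by it, establishing~(b) and simultaneously the uniqueness.

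The main obstacle I anticipate is the bookkeeping in the previous paragraph: carefully verifying that the feasible set $\{z\in\ZZ^d : a_1 z\le b_1-1,\dots,a_d z\le b_d-1\}$ of the integer program in~(b) coincides (as far as minimizing $a_0'$ is concerned) with the set of admissible retranslations, and that no spurious feasible point with smaller $a_0'$-value lies outside the body. The point is that any feasible $z$ with $a_0' z \le 0$ must have $a_0 z \le 0$ (taking $\epsilon\to 0$), and combined with $a_i z \le b_i - 1 < b_i$ this forces $z$ into $B$ on or below $F_0$; the maximal-lattice-free property then forces $a_0 z = 0$, i.e. $z\in\operatorname{relint}F_0$, so $z$ is one of the finitely many competing representatives. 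Thus the minimum is attained, is attained uniquely, and selecting it produces the canonical $K_b$; any two maximal lattice free bodies related by an integral translation yield the same canonical representative because the construction only used translation-invariant data (the facet normals $A$, the lattice $\ZZ^d$, and the fixed perturbation recipe for $a_0'$). The secondary subtlety—choosing a single $\epsilon>0$ that works simultaneously—is handled by noting there are only finitely many maximal lattice free bodies up to translation (Theorem~\ref{Theorem:fini}) and, for each, finitely many competing representatives, so finitely many strict inequalities in $\epsilon$ must hold; any $\epsilon$ below all the relevant thresholds works, and "small $\epsilon$" is understood in this sense.
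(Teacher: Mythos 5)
Your proposal is correct and follows essentially the same route as the paper: translate an integral point of the relative interior of $F_0$ to the origin to get (a), observe via lattice-freeness that the optimal solutions of the unperturbed program are exactly the integral points of $\operatorname{relint}F_0(b)$ (all with $a_0z=0$), and let the perturbation $a_0'$ break the tie. You supply several details the paper's terse proof omits (the lexicographic reading of $a_0'$ on $\{a_0z=0\}$, injectivity of $z\mapsto(a_1z,\dots,a_dz)$ from the nonvanishing $d\times d$ minor, and the uniform choice of $\epsilon$ via Theorem~\ref{Theorem:fini}), but the underlying argument is the same.
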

\begin{proof}
  If $K_b$ is a maximal lattice free body, then the optimal solutions
  to the integer program
$$
\min\{a_0  z \mid z\in \mathbb{Z}^d, a_1 z\leq b_1-1, \dots, a_d z
\leq b_d-1\}
$$ 
are the integral points in the relative interior of $F_0(b)$.
Therefore they all satisfy $a_0 z = 0$. By perturbing $a_0$ into
$a_o'$ for $\epsilon>0$ small, we identify a unique integral solution
$z_0 \in F_0(b)$ as the optimal solution to the perturbed integer
program
$$
\min\{(a_0') z \mid z\in \mathbb{Z}^d, a_1 z\leq b_1-1, \dots, a_d z
\leq b_d-1\}.
$$
For each maximal lattice free body with $0$ in the relative interior,
$F_0$, of $F_0(b)$ this identifies a unique integral point in $F_0$.
The perturbation of $a_0$ thereby 
identifies unique representatives of the cosets in $\cM(A)/\ZZ^d$.
\end{proof}

The key idea is to use the much smaller test set associated with the
perturbation $a_0'$ in constructing representatives of the maximal
lattice free bodies in $\cM(A)$:

\begin{Theorem}\label{Theorem:MLFB}
  Let $T$ be a test set for the family of integer programs given by
$$
\min\{a_0' z \mid z\in \mathbb{Z}^d, a_1 z\leq b_1, \dots,
a_d z \leq b_d\},
$$
where $b_1,\dots, b_d\in \ZZ$.  If $K_b$ is a maximal lattice free
body as in Lemma \ref{Lemma:unique}, then
there exists a $d$-tuple $(z_1, \dots, z_d)$ of elements in $T$
such that 
\begin{enumerate}[(a)]
%\item $a_0' z_i < 0$
\item $a_i z_i > 0$
\item $a_i z_j < a_i z_i$ for $j\neq i$
\item There exists no $z\in T$ with 
$a_k z < a_k z_k$ for all $k=1,\dots,d$,
\item $a_0 z_i \leq 0$.\label{detail}
\end{enumerate}
where $i = 1, \dots, d$. 
\end{Theorem}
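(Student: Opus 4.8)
The plan is to extract the tuple $(z_1,\dots,z_d)$ from the integral points in the relative interiors of the facets $F_1(b),\dots,F_d(b)$, using the test set $T$ to witness the non-optimality that each such facet point represents. First I would recall from Lemma~\ref{Lemma:unique} that $K_b = P_A(b)$ with $b = (0,b_1,\dots,b_d)^t$, and $0$ is the optimal solution of $\min\{a_0' z \mid a_i z \le b_i - 1,\ i=1,\dots,d\}$. Fix an index $i \in \{1,\dots,d\}$. Since $K_b$ is maximal lattice free, Proposition~\ref{Proposition:crucial} gives an integral point $p_i$ in the relative interior of $F_i(b)$, so $a_i p_i = b_i$ and $a_k p_i < b_k$ for $k \ne i$ (including $k = 0$, where $a_0 p_i \le b_0 = 0$, in fact strictly since $p_i$ is not on $F_0(b)$). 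Now consider the shifted right-hand side obtained from the program in Lemma~\ref{Lemma:unique}(b) by relaxing only the $i$-th constraint back to $a_i z \le b_i$: the point $p_i$ is feasible for this relaxed program, and I claim it beats $0$, i.e.\ $a_0' p_i < 0$. This holds because $a_0 p_i < 0$ (a negative integer, hence $\le -1$) dominates the $O(\epsilon)$ perturbation terms $\epsilon a_1 p_i + \cdots + \epsilon^d a_d p_i$ for $\epsilon$ small. So $0$ is feasible but not optimal for $\IP_{A,a_0'}$ with this right-hand side; wait, more carefully: I should apply the test set to the configuration where the right-hand side is exactly $(b_1,\dots,b_d)$ as in the hypothesis of Theorem~\ref{Theorem:MLFB}, and $0$ is feasible but not optimal there (since $p_i$ is feasible and $a_0' p_i < 0 = a_0' \cdot 0$). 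By the defining property of the test set $T$, there exists $z_i \in T$ such that $0 + z_i = z_i$ is feasible, i.e.\ $a_k z_i \le b_k$ for all $k = 1,\dots,d$, and $a_0' z_i < 0$.

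Next I would verify properties (a)--(d) for these $z_i$. For (d), $a_0' z_i < 0$ together with the fact that $a_0 z_i$ is an integer and the perturbation is small forces $a_0 z_i \le 0$. For (a): if $a_i z_i \le 0$, I would argue $z_i$ is already feasible for the strict program $\{a_k z \le b_k - 1 : k=1,\dots,d\}$ — indeed $a_i z_i \le 0 \le b_i - 1$ (using $b_i > 0$) and I would need $a_k z_i \le b_k - 1$ for $k \ne i$; this requires the stronger claim that $z_i$ can be chosen with $a_k z_i < b_k$ strictly for $k \ne i$, which I can arrange by a standard perturbation of the right-hand side argument (replace $b_k$ by $b_k - \delta$ for $k \ne i$, keeping $b_i$, apply the test set there, and note feasibility is an open condition on $p_i$). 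Then $z_i$ feasible for the strict program with $a_0' z_i < 0$ contradicts optimality of $0$ in Lemma~\ref{Lemma:unique}(b), so $a_i z_i > 0$, which is (a), and $a_k z_i < b_k$ for $k \ne i$. Renormalising: I actually want $a_i z_j < a_i z_i$ for $j \ne i$, i.e.\ (b). Here I would use (a) applied to index $j$: $a_j z_j > 0$, and I need $a_i z_j < a_i z_i$. I would get $a_i z_i = b_i$ or close to it; more precisely, to get the comparison (b) I would choose $b_1,\dots,b_d$ in the test-set program to be exactly the values from $K_b$ and exploit that each $z_j$ satisfies $a_i z_j < b_i$ strictly for $i \ne j$ while $z_i$ satisfies $a_i z_i \le b_i$ — but I need $a_i z_i$ to actually equal (or be close to) $b_i$. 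This is where I would instead run the argument with $p_i$ itself in hand: note $a_i z_i \le b_i = a_i p_i$, and for $j \ne i$, $a_i z_j < b_i$; combining, $a_i z_j < b_i$ and I want $a_i z_j < a_i z_i$. If $a_i z_i = b_i$ this is immediate; if $a_i z_i < b_i$ I would need a tighter selection of $z_i$, so the cleanest route is to run the test-set argument with right-hand side $(a_1 p_i, \dots, a_d p_i) = b$ shifted so that $p_i$ lies on the $i$-th facet, forcing $z_i$ onto it too, but the test set gives $a_i z_i \le a_i p_i = b_i$ and equality need not hold. To force equality I would re-perturb: use the cost vector and the test-set property that $z_i$ is a minimal improving vector, so no $z \in T$ has $a_k z < a_k z_i$ for all $k$ — this is exactly (c), and I would derive it directly from the test set by applying it to $p_i$ as feasible-but-not-optimal and taking $z_i$ to be the improvement the test set produces.

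Let me streamline: for each $i$, apply the test-set property to the feasible non-optimal solution $0$ of $\IP_{A,a_0'}$ with right-hand side $\tilde b^{(i)} := (b_1 - \delta,\dots,b_{i-1}-\delta, b_i, b_{i+1}-\delta,\dots,b_d-\delta)$ for $\delta>0$ tiny (feasible non-optimal because $p_i$ witnesses it, since $a_k p_i < b_k$ for $k \ne i$ and $a_0' p_i < 0$); obtain $z_i \in T$ feasible for $\tilde b^{(i)}$, which gives $a_i z_i \le b_i$, $a_k z_i < b_k$ for $k \ne i$, and $a_0' z_i < 0$ hence $a_0 z_i \le 0$, which is (d). For (a): were $a_i z_i \le 0$, then since also $a_k z_i < b_k$ for $k \ne i$ and $\delta$ can be shrunk so $a_k z_i \le b_k - 1$, $z_i$ would be feasible for the program in Lemma~\ref{Lemma:unique}(b) and strictly better than $0$ — contradiction; so $a_i z_i > 0$, giving (a). For (b): for $j \ne i$ we have $a_i z_j < b_i$ from the construction of $z_j$ (the $i$-th coordinate of $\tilde b^{(j)}$ is $b_i - \delta < b_i$), and we want this bounded by $a_i z_i$. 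If $a_i z_i = b_i$ we are done; in general I would observe that the choice of $z_i$ can be refined so that $z_i$ lies on the $i$-th facet — the set of integral points feasible for $\tilde b^{(i)}$ with negative $a_0'$-cost, intersected with $\{a_i z = b_i\}$, is nonempty (it contains $p_i$), and the test set, being a test set for the whole family, produces such a $z_i$ when we additionally tighten $b_i \mapsto b_i$ is already tight; I would make this rigorous by noting that among all test-set improvements the one the definition furnishes for the specific instance with $p_i$ optimal-adjacent has $a_i z_i = b_i$. Finally (c): if some $z \in T$ had $a_k z < a_k z_k$ for all $k = 1,\dots,d$, then since $a_k z_k \le b_k$ we get $a_k z \le b_k - 1$ (integrality), so $z$ is feasible for Lemma~\ref{Lemma:unique}(b)'s program; moreover $a_0' z < 0$ must hold — this needs the extra input that $z \in T$ implies $(a_0')^t z < 0$, which is part (a) of the test-set definition applied with cost $a_0'$ — contradicting optimality of $0$. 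Hence (c). I would close by noting the $z_i$ are distinct (different facets) so $(z_1,\dots,z_d)$ is a genuine $d$-tuple.

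The main obstacle is property (b), equivalently arranging that $a_i z_i = b_i$ (or at least $> a_i z_j$ for $j \ne i$): the bare test-set property only guarantees $a_i z_i \le b_i$, and one must work slightly harder — either by a monotone perturbation/limiting argument on the right-hand side, or by invoking that $T$ is a test set for \emph{all} right-hand sides simultaneously and applying it to the instance where $p_i$ is on the boundary — to pin $z_i$ to the $i$-th facet. Everything else is a routine unwinding of the definitions of test set and of maximal lattice free body via Proposition~\ref{Proposition:crucial}, together with the observation that the $\epsilon$-perturbation in $a_0'$ is dominated by any nonzero integer value of $a_0 z$.
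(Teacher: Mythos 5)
Your overall strategy is the same as the paper's: the printed proof of Theorem~\ref{Theorem:MLFB} is a one-line deferral to the proof of Theorem~\ref{Theorem:fini}, which runs exactly the argument you set up --- take the integral point $p_i$ in the relative interior of $F_i(b)$ supplied by Proposition~\ref{Proposition:crucial}, note that $0$ is feasible but not optimal for the program with right-hand side $(b_1-1,\dots,b_{i-1}-1,\,b_i,\,b_{i+1}-1,\dots,b_d-1)$, and extract $z_i\in T$ from the test-set property. Your derivations of (a), (c) and (d) are fine. But there is a genuine gap at (b), and it sits exactly where you flag it: you never actually establish $a_i z_i=b_i$, and the two rescues you sketch do not work. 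In particular, the claim that ``among all test-set improvements the one the definition furnishes for the specific instance with $p_i$ optimal-adjacent has $a_i z_i=b_i$'' has no content --- Definition of a test set is purely existential (\emph{some} $v\in T$ with $0+v$ feasible exists), and nothing forces that $v$ onto the $i$-th facet; intersecting the feasible improving set with $\{a_i z=b_i\}$ being nonempty (it contains $p_i$) says nothing about membership of that intersection in $T$.

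The missing step is to play the two programs against each other, which is precisely what the proof of Theorem~\ref{Theorem:fini} does. Since $K_b$ is lattice free, $0$ is \emph{optimal} for the fully tightened program with right-hand side $(b_1-1,\dots,b_d-1)$ (any feasible $z$ with $a_0'z<0$ would be an interior integral point of $K_b$, after noting $a_0 z\le 0$ and ruling out $a_0z=0$ via the perturbation as in Lemma~\ref{Lemma:unique}). Hence, by part (b) of the definition of a test set, $0+z_i$ is \emph{infeasible} for that program for every $z_i\in T$, i.e.\ $a_k z_i\ge b_k$ for some $k$. Your $z_i$ already satisfies $a_k z_i\le b_k-1$ for all $k\ne i$, so the violated constraint must be the $i$-th one: $a_i z_i\ge b_i$. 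Combined with the feasibility bound $a_i z_i\le b_i$ from the relaxed program this pins $a_i z_i=b_i$ exactly, after which (a) is immediate ($b_i>0$) and (b) follows from $a_i z_j\le b_i-1<b_i=a_i z_i$ for $j\ne i$. With this one observation inserted, your argument closes and coincides with the paper's.
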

\begin{proof}
  This is a straightforward translation of the proof of Theorem
  \ref{Theorem:fini} except for (\ref{detail}): we only have a test
  set with respect to the perturbed cost vector $a_0'$ i.e. we have
  $a_0' z_i < 0$ in (\ref{detail}) but not necessarily $a_0
  z_i < 0$.
\end{proof}

\subsection{Backtracking maximal lattice free
  bodies}\label{Sect:compmlfb}

In this section we outline a backtracking algorithm for enumerating
the unique representatives of the maximal lattice free bodies $K_b$
alluded to in Lemma \ref{Lemma:unique}. Recall that 
$$ 
F_i(b) = \{x\in K_b \mid a_i x = b_i\}
$$
denotes the facet given by the $i$-th row $a_i$ of $A$.

By Theorem \ref{Theorem:MLFB}, $K_b = \<0, v_1, \dots, v_d\>_A$
for a certain $d$-tuple $V=(v_1,\dots, v_d)$ of $T$, such that 
$$
v_i\in F_i(b)\setminus \bigcup_{j=1, j\neq i}^d F_j(b).
$$
Using this observation, there is a simple backtracking algorithm for
generating these specific tuples discarding most of the $d$-tuples
of $T$:
by definition, none of 
$$
\<0, v_1\>_A \subset \cdots \subset \<0, v_1,
\dots, v_d\>_A
$$ 
contain interior points from $T$.  The basic geometric idea is to
assign points from $T$ to $F_1(b), \dots, F_d(b)$ keeping Proposition
\ref{Proposition:crucial} in mind. Define for
$v\in T$ the set
$$
H_i(v) = \{u\in T\mid a_i u < a_i v\},
$$
where $1\leq i \leq d$. Assume inductively that we have constructed a
(partial) tuple $V'=(v_1, \dots, v_i)$ with the property that
$$
v_i \in H_1(v_1)\cap\cdots\cap H_{i-1}(v_{i-1})
$$
for $i\geq 2$ and such that $\<0, v_1,\dots, v_i\>_A$ contains no
interior points from $T$. Then we add $v\in T$
to $V'$ if and only if
\begin{enumerate}[(a)]
\item
$v\in H_1(v_1)\cap\cdots\cap H_i(v_i)$
\item
$\<0, v_1,\dots, v_i, v\>_A$ contains no interior points from $T$.
\end{enumerate}

If this is not possible, $V'$ can never be extended to define a
maximal lattice free body and we backtrack to $V''=(v_1, \dots,
v_{i-1})$ knowing that the extension of $V''$ by $v_i$ leads to a dead
end.

By Theorem \ref{Theorem:MLFB}, we are sure that $B=\<0,
v_1,\dots, v_d\>_A$ is lattice free. However if some $(a_0)^t v_i = 0$,
$B$ may be contained in a strictly larger lattice free body by moving
the facet corresponding to $a_i$. So the backtracking algorithm above
may generate a superset of the maximal lattice free bodies. Note that
it is easy to decide from this superset $S$ whether a lattice free
body is maximal by checking if it is maximal in $S$.

\section{The reverse lexicographic term order}

Consider the usual lexicographic term order $\prec_{lex}$ on vectors in
$\ZZ^n$ starting from the left most coordinate and moving right. Then
the reverse lexicographic term order (for vectors in the subgroup
annihilated by a positive vector $y\in \NN^n$) is given by
$u \prec v \iff -u \prec_{lex} -v$.

The perturbation
$$
a_0' = a_0 + \epsilon a_1 + \cdots + \epsilon^d a_d
$$ 
considered in Lemma \ref{Lemma:unique} has very specific algebraic
meaning.  The (minimal) test set for this perturbation corresponds to
a reverse lexicographic Gr\"obner basis for the ideal $I_\cL$, where
$\cL$ is the lattice generated by the columns of $A$. This can be read
off from the translation (see \S\ref{sect:alg}) of the integer program
$$
\min\{a_0' z \mid A z \leq b\}
$$ 
into the group program
$$ \min\{-u_0 - \epsilon u_1 - \cdots - \epsilon^d u_d \mid u\in
\NN^{d+1}, u \equiv b\pmod {\cL}\}.
$$ 
When $\epsilon$ is infinitesimally small, then 
$$
u\prec v\iff (-1, -\epsilon, -\epsilon^2, \dots, -\epsilon^d)(v -u) > 0
$$
for $u, v\in \ZZ^n$.

\section{The Scarf-Shallcross algorithm}\label{sect:SS}

In this section we explain how our algorithm specializes to the
Scarf-Shallcross algorithm for $3\times 2$-matrices. First we review
the connection between maximal lattice free bodies and the Frobenius
problem given in \cite{ScarfShallcross1993}.  

Consider the Frobenius problem given by $a = (a_1, \dots, a_n)$, where
$a_1, \dots, a_n$ are coprime positive integers. We collect a $\ZZ$-basis of $\cL=\{v\in \ZZ^n \mid a v = 0\}$
in the columns of the $n\times (n-1)$ matrix $A$. 
The key observation is the following.

\begin{Theorem}[Scarf and Shallcross]\label{Theorem:SS}
  Let $b\in\ZZ^n$. The integer $a b$ is representable as a
  non-negative integral linear combination of $a_1, \dots, a_n$ if and
  only if $K_b\cap \ZZ^{n-1} = \emptyset$,
where
$$
K_b = \{x\in \RR^{n-1} \mid A x\leq b\}.
$$
\end{Theorem}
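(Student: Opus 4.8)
The plan is to exhibit an explicit correspondence, via slack variables, between the integral points of $K_b$ and the nonnegative integral representations of the number $ab$, and to read the equivalence off from it. The single structural fact I will use repeatedly is that $aA=0$: by construction the columns of $A$ form a $\ZZ$-basis of $\cL=\{v\in\ZZ^n\mid av=0\}$, so each column is annihilated by $a$ and $aA$ is the zero row vector. The same basis property gives $A\ZZ^{n-1}=\cL$ on the nose (not merely up to finite index), and it is this that will let me move back and forth between the two pictures.

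First I would start from an integral point $x\in K_b\cap\ZZ^{n-1}$ and set $u:=b-Ax$. Integrality of $A$, $x$ and $b$ forces $u\in\ZZ^n$, while the defining inequalities $Ax\le b$ of $K_b$ say exactly that $u\in\NN^n$. Left-multiplying by $a$ and using $aA=0$ gives $au=ab-(aA)x=ab$, so $u$ exhibits $ab=\sum_i a_iu_i$ as a nonnegative integral combination; hence $ab$ is representable. Conversely, if $ab=au$ for some $u\in\NN^n$, then $a(b-u)=0$, so $b-u\in\cL$, and since the columns of $A$ are a basis of $\cL$ there is a unique $x\in\ZZ^{n-1}$ with $Ax=b-u$; from $u\ge 0$ we get $Ax=b-u\le b$, i.e. $x\in K_b\cap\ZZ^{n-1}$.

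The two directions together show that $x\mapsto b-Ax$ is a bijection from $K_b\cap\ZZ^{n-1}$ onto $\{u\in\NN^n\mid au=ab\}$, so $K_b$ contains an integral point precisely when $ab$ is representable; equivalently, in the form used to locate Frobenius gaps, $K_b\cap\ZZ^{n-1}=\emptyset$ holds exactly when $ab$ is not representable. I expect the only delicate points to be the two I have flagged, and they are bookkeeping rather than analysis: that the slacks remain integral (this is where $b\in\ZZ^n$ is used) and that the columns of $A$ form a genuine $\ZZ$-basis of $\cL$ rather than merely spanning a sublattice of finite index, since it is exactly this that makes $b-u\mapsto x$ well defined and keeps the correspondence a bijection. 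No inequality manipulation beyond the equivalence of $Ax\le b$ with $u\ge 0$ is needed.
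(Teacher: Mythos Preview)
Your argument is correct and is essentially the same slack-variable correspondence $u=b-Ax$ that the paper's proof uses, only with more of the bookkeeping spelled out (why $aA=0$, why the columns of $A$ are a genuine $\ZZ$-basis of $\cL$). You have also quietly repaired the sign in the theorem statement: as the paper's own proof and the subsequent formula for $g(a_1,\dots,a_n)$ make clear, the intended equivalence is that $ab$ is representable iff $K_b\cap\ZZ^{n-1}\neq\emptyset$, and your final sentence states this correctly.
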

\begin{proof}
  If $a b = a u$ for $u\in \NN^n$, then $b-u\in\cL$.
  Therefore $b - u = A y$ for some $y\in \ZZ^{n-1}$ and $K_b$ contains
  the integral point $y$. If $K_b$ contains an integral point $x$,
  then $u = b-A x\in \NN^{n}$ satisfies $a u = a b$.
\end{proof}

Theorem \ref{Theorem:SS} shows that 
$$
g(a_1, \dots, a_n) = \max\{a  b\mid b\in \ZZ^n\hbox{ and }K_b\cap
\ZZ^{n-1}=\emptyset\}.
$$
In terms of maximal lattice free bodies we have
\begin{align*}
  g(a_1, \dots, a_n) = &\max\{a b\mid b\in \ZZ^n\hbox{ and }K_b \hbox{ is a maximal lattice free body }\}\\
  & - \sum a_i.
\end{align*}

\subsection{Computing the Frobenius number}

In the notation of the beginning of \S\ref{sect:SS}, note that if
$$
K_c = z + K_b
$$
for $z\in \ZZ^{n-1}$ and $b, c\in \ZZ^n$, then $a c = a b$. This implies that
$$
g(a_1, \dots, a_n) = \max\{a b \mid K_b + \ZZ^{n-1}\in \cM(A)/\ZZ^{n-1}\} - \sum a_i
$$
This means that the algorithm in \S \ref{Sect:compmlfb} may be used to
compute the Frobenius number.  The output from this algorithm may
contain non-maximal lattice free bodies along with the representatives
of the maximal ones. From the point of view of computing the Frobenius
number, it is a computational advantage to simply compute the maximum
above using all these bodies.

\subsection{$n=3$}\label{SS3}

In this last subsection we explain how the Scarf-Shallcross algorithm
for computing Frobenius numbers in dimension three may be viewed in
the framework of this paper. We will argue that the reduction
procedure in \cite{ScarfShallcross1993} actually computes a test set
for a specific integer program. The Scarf-Shallcross algorithm is
highlighted in Example \ref{example:ss} of this paper. To get a
feeling for the algorithm the reader is encouraged to start with this
example.

Let $a=(a_1, a_2, a_3)$ be three coprime positive integers and put
$\cL = \{v\in \ZZ^3\mid a v = 0\}$. Then $\cL$ has a particularly
favorable $\ZZ$-basis (see \cite{ScarfShallcross1993}) given by $\cL =
\ZZ u + \ZZ v$, where
$$
u =
\left(\begin{array}{r} 
-\gamma \\ 
\lambda a_1 \\ 
-\mu a_1
\end{array}
\right)
\hbox{\ \ and\ \ }
v = 
\left(\begin{array}{r} 
0 \\
-a_3/\gamma\\
a_2/\gamma
\end{array}
\right),
$$ for $\gamma = \gcd(a_2, a_3)$ and $\gamma = \lambda a_2- \mu a_3$,
where $\lambda, \mu\in \ZZ$ with $0\leq \mu < a_2/\gamma$ and $0 <
\lambda \leq a_3/\gamma$.  By our algorithm we need to compute a test
set for the family of integer programs given by
\begin{equation}\label{eq:ssip}
\min \{-x \mid 
\left(
\begin{array}{rr}
\lambda a_1 & -a_3/\gamma\\
-\mu a_1  & a_2/\gamma y
\end{array}
\right)
\vect{x \\ y} \leq
\vect{b_1 \\ b_2}, x, y\in \ZZ\},
\end{equation}
 where $b_1, b_2\in \ZZ$, to find 
the maximal lattice free bodies $P_A(b)$ for the $3\times 2$ matrix
$$
A=
\left(
\begin{array}{rr}
-\gamma & 0 \\
\lambda a_1 & -a_3/\gamma\\ 
-\mu a_1 & a_2/\gamma
\end{array}
\right).
$$
The Scarf-Shallcross algorithm (\cite{ScarfShallcross1993}, \S
3) finds a unimodular matrix $U$ (see Lemma \ref{Lemma:unimod})
transforming the integer programming problem \eqref{eq:ssip} into
$$
\min\{c^t v \mid B v \leq b, v\in \ZZ^2\},
$$
where 
\begin{align*}
&c_1 < 0, c_2 \leq 0\\ 
&B_{11}>0, B_{12} < 0\\
&B_{21}\leq 0, B_{22}>0
\end{align*}
with $B_{11} + B_{12} \geq 0$ and $B_{21} + B_{22} > 0$. For this
integer programming problem, a test set is $\{e_1, e_2, e_1 + e_2\}$.
According to Lemma \ref{Lemma:unimod}, $\{U^{-1} e_1, U^{-1} e_2, U^{-1}
(e_1 + e_2)\}$ is a test set for the original problem.

\begin{Example}\label{example:ss}
Suppose we wish to compute $g(12, 13, 17)$ using the Scarf-Shallcross
algorithm.  Then
$$
\mathcal{L} = \{(x, y, z) \mid 12 x + 13 y +17z = 0\} = 
\mathbb{Z}
\left(
\begin{array}{r} 
-1 \\ 48 \\ -36
\end{array}
\right) 
+
\mathbb{Z}
\left(
\begin{array}{r} 
0 \\ -17 \\ 13
\end{array}\right).
$$
The following shows the unimodular transformation $U$:
$$
\left(\begin{array}{rr} 
-1 & 0\\
48 & -17\\
-36 & 13
\end{array}\right),
\left(\begin{array}{rr} 
-1 & 0\\
14 & -17\\
-10 & 13
\end{array}\right),
\left(\begin{array}{rr} 
-1 & -1\\
14 & -3\\
-10 & 3
\end{array}\right),
\left(\begin{array}{rr} 
-4 & -1\\
5 & -3\\
-1 & 3
\end{array}\right).
$$ At each step we add a non-negative of one column to the other
preserving the sign pattern for the matrix $A$ we wish to reach. We
stop when this is not possible (as in the last matrix in the
example). In this particular example we see that every maximal lattice
free body is an integral translation of $K_{b_1}$ or $K_{b_2}$, where
$$
b_1 = \max\left(
\left(\begin{array}{r} 
0\\
0\\
0
\end{array}\right),
\left(\begin{array}{r} 
-4\\
5\\
-1
\end{array}\right),
\left(\begin{array}{r} 
0\\
2\\
2
\end{array}\right)
\right)
=
\left(\begin{array}{r} 
0\\
5\\
2
\end{array}\right)
$$
and
$$
b_2 = \max\left(
\left(\begin{array}{r} 
0\\
0\\
0
\end{array}\right),
\left(\begin{array}{r} 
-1\\
-3\\
3
\end{array}\right),
\left(\begin{array}{r} 
0\\
2\\
2
\end{array}\right)
\right)
=
\left(\begin{array}{r} 
0\\
2\\
3
\end{array}\right).
$$
The Frobenius number is
$$ g(12, 13, 17) = \max(-12 + 4\cdot 13 + 1\cdot 17, -12 + 1\cdot 13 +
2\cdot 17) = 57.
$$
\end{Example}

\bibliographystyle{amsplain}

\providecommand{\bysame}{\leavevmode\hbox to3em{\hrulefill}\thinspace}
\providecommand{\MR}{\relax\ifhmode\unskip\space\fi MR }
% \MRhref is called by the amsart/book/proc definition of \MR.
\providecommand{\MRhref}[2]{%
  \href{http://www.ams.org/mathscinet-getitem?mr=#1}{#2}
}
\providecommand{\href}[2]{#2}

\vspace{0.5cm}
\noindent
{\tt 
anders@soopadoopa.dk, niels@imf.au.dk\\
Institut for Matematiske Fag\\
Aarhus Universitet\\
DK-8000 \AA rhus C\\
Denmark.
}
\end{document}